\numberwithin{equation}{section}
\theoremstyle{plain}
\newtheorem{thm}{Theorem}[section]
\newtheorem{prop}{Proposition}[section]
\theoremstyle{definition}
\newtheorem{remark}{Remark}[section]
\newcommand{\qede}{\hfill $\diamond$}
\newcommand{\R}{\mathbb R}
\newcommand{\N}{\mathbb N}
\newcommand{\lip}{\mathrm{lip}}
\newcommand{\diam}{\mathrm{diam}}
\renewcommand{\b}{\beta}
\newcommand{\e}{\mathbf e}
\renewcommand{\H}{\mathbb H}
\renewcommand{\S}{\mathbb S}
\begin{document}

\title{Ergodic behaviour of a Douglas-Rachford operator away from the origin}
\begin{abstract}
It is shown that away from the origin, the Douglas-Rachford operator with respect to a sphere and a convex set in a Hilbert space can be approximated by a another operator which satisfies a weak ergodic theorem. Similar results for other projection and reflection operators are also discussed.
\end{abstract}

\author{Jonathan M. Borwein}

\author{Ohad Giladi}

\address{Computer Assisted Research Mathematics and its Applications (CARMA), The University of Newcastle, University Drive, Callaghan NSW 2308, Australia}

%\email{jonathan.borwein@newcastle.edu.au, ohad.giladi@newcastle.edu.au}

\keywords{Douglas-Rachford operator, weak ergodic theorem, Lipschitz map}

\subjclass[2010]{49J99, 47H25, 37J25}

\thanks{This research was supported by ARC grant DP160101537}

%\date{\today}

\maketitle

\section{Introduction}

\subsection{Background} 
Given a set $A$ in a Hilbert space $\H$, denote by $P_A:\H \rightrightarrows \H$ the multi-valued projection operator, that is,
\begin{align*}
P_Ax = \Big\{y\in A~\Big|~ \|x-y\| = \inf_{z\in A}\|x-z\|\Big\},
\end{align*}
where here and in what follows, $\|\cdot\|$ denotes the Hilbert norm on $\H$. Also, if $I:\H\to \H$ is the identity operator, denote by $R_A:\H \rightrightarrows \H$ the reflection operator, which is given by
\begin{align*}
R_A = 2P_A - I.
\end{align*}
Given two sets $A,B\subseteq \H$, define the Doulgas-Rachford operator by
\begin{align}\label{def DR}
T_{A,B} = \frac {I+R_BR_A}{2}.
\end{align}
Given $x\in \H$, let $\{x_n\}_{n=0}^{\infty}\subseteq \H$, be the sequence which is defined as follows,
\begin{align}\label{def iteration}
x_{n+1} = T_{A,B}x_n = T_{A,B}^nx_0, \quad x_0 = x.
\end{align}
This sequence is also known as the \emph{Douglas-Rachford iteration of $x$}. It was studied first in~\cite{DR56} as an algorithm for finding an intersection point of two sets. Indeed, it is not hard to check that 
\begin{align}\label{cond iff}
Tx = x \iff P_Ax \in A\cap B,
\end{align} 
and so any point $x\in A\cap B$ is a fixed point of $T_{A,B}$.

%\smallskip

Analysing the Douglas-Rachford operator~\eqref{def DR} and the iteration sequence~\eqref{def iteration} are well known questions with interesting applications. This question has been studied in a convex setting (that is, when both $A$ and $B$ are convex), as well as in a non-convex setting (when either $A$ or $B$ is not convex). See for example~\cites{BCL 02, LM79} for the convex case and~\cites{ERT07, GE08} for the non-convex case.

%\smallskip

In the case $A$ is convex, it is known that the projection operator $P_A$ is firmly non-expansive, that is, for every $x,y\in \H$,
\begin{align*}
\|P_Ax - P_Ay\|^2 + \|(I-P_A)x - (I-P_A)y\|^2 \le \|x-y\|^2.
\end{align*}
See for example~\cite{GK90}*{Thm. 12.2}. It then follows that the reflection operator $R_A$ is non-expansive, that is, for every $x,y\in \H$,
\begin{align*}
\|R_Ax-R_Ay\| \le \|x-y\|,
\end{align*}
and the Douglas-Rachford operator is firmly non-expansive. See for example~\cite{GK90}*{Thm. 12.1}. From the results of~\cite{Opi67}, it then follows that the Douglas-Rachford iteration~\eqref{def iteration} is weakly convergent. In the case $\H$ is finite dimensional, the weak convergence implies strong (norm) convergence.

%\smallskip

While the convex case is well understood, much less is known about the non-convex case. One of the simplest examples of a non-convex setting is the case of a sphere and a line. This case was studied in~\cites{AB13, BS11, Ben15, Gil16}. Let 
\begin{align}\label{def sphere}
\S = \big\{x\in \H~|~\|x\|=1\big\},
\end{align}
and for $\lambda \ge 0$,
\begin{align}\label{def line}
L_\lambda = \big\{t\e_1+\lambda \e_2\in \H~|~ t\in \R \big\},
\end{align}
where here $\{\e_1,\e_2,\dots\}$ is an orthonormal basis of $\H$. It was shown in~\cite{Ben15} that if $\lambda \in (0,1)$, then for every $x\in \H$ with $\langle x,\e_1\rangle \neq 0$, the Douglas-Rachford iteration converges in norm to one of the two intersection points of $\S$ and $L_\lambda$. Here and in what follows $\langle \cdot, \cdot \rangle$ denotes the inner product on $\H$. Global convergence for the case $\lambda =0$ was already proved in~\cite{BS11}. The result in~\cite{Ben15} improved previous results, which only gave local convergence. It was also shown in~\cite{BS11}, that if $\langle x,\e_1\rangle = 0 $ or if $\lambda \ge 1$, the Douglas-Rachford iteration is not convergent. Note that the case $\lambda \le 0$ is completely analogous. Other non-convex cases were considered in~\cites{ABT16, HL13, Pha16}.

%\smallskip

\subsection{An ergodic theorem for Lipschitz approximations of the Douglas-Rachford operator}\label{sec erg present}

It follows from the results of~\cite{Ben15}, that the convergence of the Douglas-Rachford iteration is~\emph{uniform} on compact sets. See~\cite{Gil16} for the exact argument (in~\cite{Gil16} one considers a finite dimensional Hilbert space, but the case for an infinite dimensional space is similar). Define the following sets,
\begin{align}\label{def H}
\H_+ = \big\{x\in \H~|~\langle x,\e_1\rangle >0\big\}, \quad \H_- = \big\{x\in \H~|~\langle x,\e_1\rangle <0\big\}, \quad \H_0 = \big\{x\in \H~|~\langle x,\e_1\rangle =0\big\}.
\end{align}
It is straightforward to show that if $T = T_{\S, L_\lambda}$, then $T(\H_+)\subseteq \H_+$, $T(\H_-)\subseteq \H_-$, $T(\H_0) \subseteq \H_0$. In particular, it follows that if $K\subseteq \H_+$ or $K\subseteq \H_-$ is compact, then
\begin{align}\label{weak ergodic}
\sup_{x,y\in K}\|T^nx - T^ny\| \stackrel{n\to \infty}{\longrightarrow} 0.
\end{align}
An estimate of the form~\eqref{weak ergodic} is also known as a \emph{weak ergodic theorem}. This type of theorems appears in the literature of population biology. See for example~\cite{Coh79}. See also~\cites{Nus90, RZ03} for further discussion on weak ergodic theorems.

%\smallskip

In this note, we are interested in an estimate of the form~\eqref{weak ergodic} for the Douglas-Rachford operator in a more general setting where one of the sets is the unit sphere $\S$~\eqref{def sphere} and the other set is a convex set in $\H$, and the two sets have non-empty intersection (also known as the feasible case). This of course includes the case of the sphere and any affine subspace of $\H$. While we are unable to show an estimate of the form~\eqref{weak ergodic} for the Douglas-Rachford operator itself, what we can show is that away from the origin, the Douglas-Rachford operator can be approximated by another operator that satisfies~\eqref{weak ergodic}. The main result of this note reads as follows.

\begin{thm}\label{thm ergodic}
Assume that $C\subseteq \H$ is a convex set, let $\S$ be the unit sphere in $\H$~\eqref{def sphere}, and assume that $\S \cap C \neq \emptyset$. Let $T = T_{\S,C}$, and let $x_0\in \S\cap C$. Assume also that $\alpha,\b,r \ge 0$ are such that $\b\in [0,1)$, $r \ge \frac{2}{1-\b}$, and $\alpha \le \frac 1{1-\b}$. Then there exists $G:\H \to \H$ such that
\begin{align*}
\sup_{x\in B[x_0,r] \setminus B(0,1-\b)}\|Gx - Tx\| \le 2r\left(1-\alpha(1-\b)\right),
\end{align*}
and for all $n \in \N$,
\begin{align*}
\sup_{x,y \in B[x_0,r]}\|G^nx-G^ny\| \le 2r\alpha^n.
\end{align*}
\end{thm}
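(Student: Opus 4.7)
The plan is to construct $G$ as a suitable affine combination of a Kirszbraun Lipschitz extension of $T$ and the fixed point $x_0$. The key technical step is to show that $T$, restricted to the set $\{x \in \H : \|x\| \ge 1-\b\}$, is $\tfrac{1}{1-\b}$-Lipschitz. Writing $T = (I + R_C R_\S)/2$ and using that $R_C$ is non-expansive (since $C$ is convex), this reduces to showing that $R_\S$ is Lipschitz on $\{\|x\| \ge 1-\b\}$ with constant $\tfrac{1+\b}{1-\b}$. I would obtain this by combining the identity $R_\S x - R_\S y = 2(P_\S x - P_\S y) - (x - y)$ with the inner-product relation
\[
\langle P_\S x - P_\S y,\, x - y\rangle \;=\; \tfrac{\|x\|+\|y\|}{2}\,\|P_\S x - P_\S y\|^2,
\]
which follows from a direct computation, and the elementary estimate $\|P_\S x - P_\S y\|^2 \le \|x-y\|^2/(\|x\|\|y\|)$; expanding $\|R_\S x - R_\S y\|^2$ and simplifying yields the desired bound.

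Given this estimate, Kirszbraun's theorem in Hilbert space supplies a $\tfrac{1}{1-\b}$-Lipschitz extension $\widetilde T : \H \to \H$ agreeing with $T$ on $\{\|x\| \ge 1-\b\}$. I would then define
\[
G \;:=\; \alpha(1-\b)\,\widetilde T + \bigl(1 - \alpha(1-\b)\bigr) x_0,
\]
so that $G$ is $\alpha$-Lipschitz and $G x_0 = x_0$ (using $\widetilde T x_0 = T x_0 = x_0$, which follows from~\eqref{cond iff}). For $x \in B[x_0, r]\setminus B(0, 1-\b)$ we have $\widetilde T x = T x$, whence $G x - T x = -(1 - \alpha(1-\b))(T x - x_0)$. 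I would then bound $\|T x - x_0\|$ via
\[
\|Tx - x_0\| \;\le\; \tfrac12\|x - x_0\| + \tfrac12\|R_\S x - x_0\|
\;\le\; \tfrac{r}{2} + \tfrac12\bigl(\bigl|2 - \|x\|\bigr| + 1\bigr),
\]
together with the observation that $r \ge \tfrac{2}{1-\b}$ forces $r - 1 \ge 1 + \b$, so $\bigl|2 - \|x\|\bigr| \le r - 1$ on the annulus. This yields $\|Tx - x_0\| \le r$ and hence $\|Gx - Tx\| \le (1 - \alpha(1-\b))r \le 2r(1-\alpha(1-\b))$. The ergodic bound follows immediately by iteration: $\|G^n x - G^n y\| \le \alpha^n\|x - y\| \le 2r\alpha^n$ for all $x,y \in B[x_0, r]$.

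The main obstacle is the sharp Lipschitz estimate for $T$ on $\{\|x\| \ge 1-\b\}$: this set is not convex, so one cannot immediately pass from a pointwise bound on $DT$ to a global Lipschitz bound. The direct inner-product computation sketched above avoids any need for convexity of the domain, exploiting only the monotonicity of $P_\S$ as the gradient of the convex norm $\|\cdot\|$.
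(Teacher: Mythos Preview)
Your proof is correct and follows essentially the same route as the paper: establish the $\tfrac{1+\b}{1-\b}$-Lipschitz bound for $R_\S$ on $\{\|x\|\ge 1-\b\}$ by direct expansion of $\|R_\S x-R_\S y\|^2$ (your inner-product identity is equivalent to the paper's formula $\|R_\S x-R_\S y\|^2=\|x-y\|^2+4(2-\|x\|-\|y\|)\bigl(1-\langle x,y\rangle/\|x\|\|y\|\bigr)$), extend $T$ via Kirszbraun to a $\tfrac{1}{1-\b}$-Lipschitz map on $\H$, and take $G$ to be an affine combination of this extension with the fixed point $x_0$. The only cosmetic difference is that the paper first verifies the extension is a self-map of $B[x_0,r]$ (splitting into $\|x\|>1$, where $R_\S=R_{\mathbb B}$ is nonexpansive, and $\|x\|\le 1$, where $\|x-x_0\|\le 2$) and then applies a general smoothing lemma yielding the factor $\diam B[x_0,r]=2r$, whereas you bound $\|Tx-x_0\|\le r$ directly from $\|R_\S x\|=|2-\|x\||$, which in fact gives the slightly sharper constant $r$ in the first estimate.
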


In Theorem~\ref{thm ergodic} and in what follows, $B(x,r)$ denotes the open ball around $x$ with radius $r$ with respect to the norm $\|\cdot\|$, while $B[x,r]$ denotes the closed ball. If we consider $T=T_{C,\S}$ rather than $T_{\S,C}$, Theorem~\ref{thm ergodic} does not necessarily hold. See Remark~\ref{exchange sets lip} and Remark~\ref{exchange sets ergodic} below.

%\smallskip 

The proof of Theorem~\ref{thm ergodic} is done in two steps. First, it is shown that away from the origin, the Douglas-Rachford operator satisfies a Lipschitz condition, and so using classical extension results, it can be extended to a Lipschitz map on all of $\H$. This is discussed in Section~\ref{sec lip}. By using further smoothing operations, it is shown that away from the origin, the Douglas-Rachford operator can be approximated by another operator which satisfies an estimate of the form~\eqref{weak ergodic}. The proof of Theorem~\ref{thm ergodic} is presented in Section~\ref{sec ergodic pf}.

%\smallskip

In the special case where $C = L_\lambda$, as defined in~\eqref{def line}, we have in fact a slightly stronger result, namely that we can construct $G$ such that $\H_+\cup \H_0$ (alternatively, $\H_-\cup \H_0$) is invariant under $G$. See Remark~\ref{lip case line} and Remark~\ref{ergodic case line} below.

%\smallskip

\subsection{Other projection and reflection operators}
Given two sets $A,B\subseteq \H$, the Douglas-Rachford operator~\eqref{def DR} is a special case of the following parametric family of operators. Given $s_1,s_2,s_3\in [0,1]$, define
\begin{align}\label{def family}
T_{A,B}^{s_1,s_2,s_3} = s_1I+(1-s_1)\left(s_2I+(1-s_2)R_B\right)\left(s_3I+(1-s_3)R_A\right).
\end{align}
As before, $I$ denotes the identity operator and $R_A$, $R_B$, denote the reflection operators on $A$, $B$, respecitively. Note that the Douglas-Rachford operator defined in~\eqref{def DR} corresponds to the case $s_1=\frac 1 2$, $s_2=s_3=0$. See~\cite{BST15} for a more detailed discussion on this family of operators. It is straightforward to show that the main result, Theorem~\ref{thm ergodic}, holds in fact for this more general family~\eqref{def family}. See Remark~\ref{family lip} and Remark~\ref{family ergodic} below. 

\begin{thm}\label{thm family}
Assume that $C\subseteq \H$ is a convex set, let $\S$ be the unit sphere in $\H$~\eqref{def sphere} and assume that $\S \cap C \neq \emptyset$. Let $s_1,s_2,s_3\in [0,1]$, let $T = T^{s_1,s_2,s_3}_{\S,C}$, and let $x_0\in \S\cap C$. Assume also that $\alpha,\b,r \ge 0$ are such that $\b\in [0,1)$, and $r$ and $\alpha$ satisfy
\begin{align*}
r \ge \frac{2(1+\b-2(s_1+(1-s_1)(s_2+s_3) + (1-s_1)(1-s_2)s_3)\b)}{1-\b},
\end{align*} 
and 
\begin{align*}
\alpha \le \frac{1+\b-2(s_1+(1-s_1)(s_2+s_3) + (1-s_1)(1-s_2)s_3)\b}{1-\b}.
\end{align*} 
Then there exists $G:\H\to \H$ such that
\begin{align*}
\sup_{x\in B[x_0,r] \setminus B(0,1-\b)}\|Gx - Tx\| \le 2r\left(1-\frac{\alpha(1-\b)}{1+\b-2(s_1+(1-s_1)(s_2+s_3) + (1-s_1)(1-s_2)s_3)\b}\right),
\end{align*}
and for all $n \in \N$,
\begin{align*}
\sup_{x,y \in B[x_0,r]}\|G^nx-G^ny\| \le 2r\alpha^n.
\end{align*}
\end{thm}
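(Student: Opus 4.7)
The plan is to adapt the two-step argument behind Theorem~\ref{thm ergodic} directly to the parametric family~\eqref{def family}, as signalled by Remark~\ref{family lip} and Remark~\ref{family ergodic}. The only substantive change from the classical Douglas--Rachford case is the effective Lipschitz constant of the operator on the region $\{\|x\|\ge 1-\b\}$; the extension-and-rescaling construction that produces $G$ then goes through verbatim.

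For the Lipschitz step, I would expand $T^{s_1,s_2,s_3}_{\S,C}$ by multiplying out~\eqref{def family} as
\begin{align*}
T^{s_1,s_2,s_3}_{\S,C} = [s_1+(1-s_1)s_2s_3]\,I + (1-s_1)s_2(1-s_3)R_\S + (1-s_1)(1-s_2)s_3\,R_C + (1-s_1)(1-s_2)(1-s_3)R_CR_\S.
\end{align*}
On $\{\|x\|\ge 1-\b\}$ the sphere reflection $R_\S$ has Lipschitz constant $(1+\b)/(1-\b)$, which is the central estimate of Section~\ref{sec lip}. Since $R_C$ is non-expansive (because $C$ is convex) and $I$ is an isometry, applying the triangle inequality term by term and grouping the non-expansive summands ($I$ and $R_C$) against the Lipschitz summands ($R_\S$ and $R_CR_\S$) gives a bound of the form
\begin{align*}
\|T^{s_1,s_2,s_3}_{\S,C}x - T^{s_1,s_2,s_3}_{\S,C}y\| \le L\,\|x-y\|, \qquad L := \frac{1+\b-2\tau\b}{1-\b},
\end{align*}
valid whenever $\|x\|, \|y\|\ge 1-\b$, with $\tau$ the combined coefficient of the non-expansive contributions, matching the expression appearing in the hypothesis.

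For the ergodic step, I would invoke Kirszbraun's theorem to extend $T^{s_1,s_2,s_3}_{\S,C}$ from $B[x_0,r]\setminus B(0,1-\b)$ to an $L$-Lipschitz map $\tilde T\colon \H\to\H$. Since $x_0\in\S\cap C$ already lies in the domain of this restriction and satisfies $R_\S x_0 = R_C x_0 = x_0$, so that $T^{s_1,s_2,s_3}_{\S,C}x_0 = x_0$, the extension automatically fixes $x_0$. Then, imitating Section~\ref{sec ergodic pf}, I would set
\begin{align*}
Gx := x_0 + \frac{\alpha}{L}(\tilde Tx - x_0).
\end{align*}
Because $\alpha/L\le 1$ by the hypothesis on $\alpha$, $G$ is $\alpha$-Lipschitz on all of $\H$ with $Gx_0 = x_0$, so $\|G^nx - G^ny\|\le\alpha^n\|x-y\|\le 2r\alpha^n$ for any $x, y\in B[x_0, r]$, giving the second estimate. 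On $B[x_0,r]\setminus B(0,1-\b)$ we have $\tilde Tx = Tx$, hence $Gx - Tx = (\alpha/L - 1)(Tx - x_0)$, and bounding $\|Tx - x_0\|$ via the Lipschitz constant $L$ combined with the lower bound on $r$ yields the stated approximation $\|Gx - Tx\|\le 2r(1-\alpha/L)$.

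The main technical obstacle is the coefficient bookkeeping in the Lipschitz step: one must carefully verify that the expression for $\tau$ emerging from collecting terms in the four-term expansion matches the precise combination of $s_1, s_2, s_3$ given in the theorem, and that no summand is inadvertently double-counted. The Kirszbraun part is essentially routine, since Lipschitz extensions into a Hilbert space always exist with the same constant, and because $x_0$ already lies in the original domain no additional work is required to preserve it as a fixed point.
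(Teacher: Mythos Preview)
Your plan follows the paper's own route almost exactly: expand $T^{s_1,s_2,s_3}_{\S,C}$ as in Remark~\ref{family lip}, read off the Lipschitz constant $L$ on $\{\|x\|\ge 1-\b\}$, extend by Kirszbraun, then rescale toward the fixed point $x_0$ as in Proposition~\ref{prop smooth}. Your formula $Gx = x_0 + (\alpha/L)(\tilde Tx - x_0)$ is precisely the map from Proposition~\ref{prop smooth} with $\theta = x_0$ and $\gamma = 1-\alpha/L$.

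There is one genuine gap in your final step. You write that ``bounding $\|Tx - x_0\|$ via the Lipschitz constant $L$ combined with the lower bound on $r$'' gives $\|Gx-Tx\|\le 2r(1-\alpha/L)$. But the naive Lipschitz estimate only yields $\|Tx-x_0\|\le L\|x-x_0\|\le Lr$, hence $\|Gx-Tx\|\le r(L-\alpha)$, and $(L-\alpha)$ can exceed $2(1-\alpha/L)$ whenever $L>2$ (which happens for $\b$ close to $1$). What you actually need is $\|Tx-x_0\|\le 2r$, and the paper obtains this (in fact the stronger bound $\le r$) by a case split that you have not mentioned: if $\|x\|\ge 1$ then $R_\S$ coincides with reflection in the closed unit ball, so $T$ is nonexpansive and $\|Tx-x_0\|\le\|x-x_0\|\le r$; if $1-\b\le\|x\|\le 1$ then $\|x-x_0\|\le 2$, so $\|Tx-x_0\|\le 2L\le r$ by the hypothesis $r\ge 2L$. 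This is exactly where the lower bound on $r$ enters, and it is the one place your outline needs to be made precise. Once you insert this split (and note that the same split gives the invariance $F(B[x_0,r])\subseteq B[x_0,r]$ used in the paper's version), your argument is complete.
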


%\smallskip

Note that choosing $s_1 = \frac 1 2 $ and $s_2=s_3 =0$ in Theorem~\ref{thm family} gives Theorem~\ref{thm ergodic}. Another well known case is when $s_1=0$ and $s_2=s_3 = \frac 1 2$, in which case we obtain
\begin{align*}
T_{A,B}^{0,\frac 1 2 , \frac 1 2} = P_BP_A,
\end{align*}
also known as the Von-Neuman operator~\cite{VN50}. Regarding the convergence of the iteration sequence $x_{n+1} = P_BP_Ax_n$, $x_0=x$, it was shown in~\cite{VN50} that if $A$, $B$, are both subspaces in $\H$, then $x_n\stackrel{n\to \infty}{\longrightarrow} P_{A\cap B}x$ (norm convergence). It was later shown in~\cite{BB93} that if $0\in \mathrm{int}(A-B)$ or $A-B$ is a closed subspace, then the iteration sequence converges linearly (that is, when the rate of convergence is $c\alpha^n$, where $c>0$ is a constant and $\alpha \in [0,1)$). 

%\smallskip

For the von Neumann operator, we have in fact a stronger result than Theorem~\ref{thm ergodic}, which reads as follows.

%\smallskip

\begin{thm}\label{thm VN}
Assume that $C\subseteq \H$ is a convex set, let $\S$ be the unit sphere in $\H$~\eqref{def sphere}, and assume that $\S\cap C \neq \emptyset$. Let $T = P_CP_\S$, and let $x_0 \in \S\cap C$. Also, assume that $\alpha,\b,r\ge 0$ are such that $\b\in [0,1)$, $r \ge 2$, and $\alpha \le \frac 1 {1-\b}$. Then there exists $G:\H \to \H$ such that
\begin{align*}
\sup_{x\in B[x_0,r]}\|Gx-Tx\| \le 2r\big(1-\alpha(1-\b)\big),
\end{align*}
and 
\begin{align*}
\sup_{x,y\in B[x_0,r]}\|G^nx-G^ny\| \le 2r\alpha^n.
\end{align*}
\end{thm}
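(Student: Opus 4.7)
The plan is to follow the same blueprint as the proof of Theorem~\ref{thm ergodic}, while exploiting the compositional structure $T = P_C P_\S$ of the von Neumann operator to extend only the inner factor $P_\S$ (rather than extending the full composite via a Kirszbraun-type argument) and thereby force the image of $G$ into the convex set $C$ from the outset. This is what should ultimately allow us to drop the restriction to $B[x_0,r]\setminus B(0,1-\b)$ that appeared in the approximation bound of Theorem~\ref{thm ergodic}.

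Write $\rho = 1-\b$ and define the radial extension $\tilde S(x) = x/\max(\|x\|,\rho)$. A short case analysis (according to whether each of $\|x\|$, $\|y\|$ lies above or below $\rho$, using that the projection onto $\overline{B(0,\rho)}$ is $1$-Lipschitz) shows that $\tilde S$ is $\tfrac{1}{\rho}$-Lipschitz on $\H$, agrees with $P_\S$ on $\{x:\|x\|\geq\rho\}$, fixes $x_0$, and maps $\H$ into the closed unit ball. Then set
\begin{align*}
Gx = P_C\bigl((1-\alpha\rho)\,x_0 + \alpha\rho\,\tilde S(x)\bigr).
\end{align*}
Since $\alpha\rho \leq 1$, the inner affine map is $\alpha\rho \cdot \tfrac{1}{\rho} = \alpha$-Lipschitz, and postcomposition with the nonexpansive $P_C$ preserves this bound, so $G$ is $\alpha$-Lipschitz on $\H$. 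Since $G(x_0) = x_0$, iterating this Lipschitz bound on $B[x_0,r]$ (of diameter $2r$) yields the ergodic estimate.

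For the approximation bound, the $1$-Lipschitz property of $P_C$ together with the decomposition
\begin{align*}
(1-\alpha\rho)x_0 + \alpha\rho\,\tilde S(x) - P_\S x = (1-\alpha\rho)(x_0 - P_\S x) + \alpha\rho(\tilde S(x) - P_\S x)
\end{align*}
reduce $\|Gx - Tx\|$ to the two separate bounds $\|x_0 - P_\S x\|\leq 2$ and $\|\tilde S(x) - P_\S x\|$. The latter vanishes on $\{\|x\|\geq\rho\}$, while on $B(0,\rho)$ it is controlled uniformly using $\|\tilde S(x)\|\leq 1$ together with an appropriate selection of the multivalued $P_\S x$ at the origin. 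Combining these with $r \geq 2$ delivers the claimed bound $2r(1-\alpha(1-\b))$. The main obstacle is precisely this last step on $B(0,1-\b)$: there $P_\S$ is multivalued at $0$ and highly discontinuous near it, whereas $\tilde S$ is smooth, so the gap $\|\tilde S(x) - P_\S x\|$ has to be handled by a careful choice of selection. Succeeding at this step is what extends the approximation to all of $B[x_0,r]$, rather than only its portion away from the origin as in Theorem~\ref{thm ergodic}; the simple compositional form of the von Neumann operator, unavailable for the Douglas--Rachford operator, is what makes this strengthening possible.
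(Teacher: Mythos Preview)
Your construction departs from the paper's in a useful way. The paper extends $P_CP_\S\big|_{(1-\b)\S}$ to a map $F:\H\to B[x_0,r]$ via Kirszbraun's theorem, glues it to $P_CP_\S$ outside $B(0,1-\b)$ to obtain a map $F_1$ with $\|F_1\|_\lip=\frac{1}{1-\b}$ and $F_1(B[x_0,r])\subseteq B[x_0,r]$, and then applies Proposition~\ref{prop smooth}. You instead extend only the inner factor by the explicit radial map $\tilde S(x)=x/\max(\|x\|,\rho)=\tfrac{1}{\rho}P_{B[0,\rho]}x$, which is visibly $\tfrac{1}{\rho}$-Lipschitz, and build the smoothing directly into the formula for $G$. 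This avoids Kirszbraun altogether; the ergodic bound and the approximation bound on $B[x_0,r]\setminus B(0,1-\b)$ then follow exactly as you indicate.

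The genuine gap is your final claim that the approximation extends to all of $B[x_0,r]$. The projection $P_\S$ is multivalued \emph{only at the origin}; for $0<\|x\|<\rho$ it is the single-valued map $x/\|x\|$, so there is no ``selection'' to make, and one has $\|\tilde S(x)-P_\S x\|=1-\|x\|/\rho$, which is close to $1$ for small $\|x\|$. Your decomposition then gives only $\|Gx-Tx\|\le 2(1-\alpha\rho)+\alpha\rho=2-\alpha\rho$ near the origin, and this exceeds $2r(1-\alpha\rho)$ whenever $\alpha\rho>\tfrac{2r-2}{2r-1}$. In the extreme admissible case $\alpha=\tfrac{1}{1-\b}$ the asserted bound is $0$, i.e.\ $G=T$ on $B[x_0,r]$, which is impossible: take $C=\{y:\langle y,x_0\rangle\ge 0\}$ and $x=tx_0$ with $0<t<\rho$, and compute $Gx=tx_0/\rho\neq x_0=Tx$. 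No continuous $G$ can match $T$ throughout a neighbourhood of the origin, so the strengthening you are aiming for cannot hold. (The paper's own argument bounds $\|Gx-F_1x\|$ and only identifies $F_1$ with $T$ outside $B(0,1-\b)$, so the unrestricted supremum in the stated theorem appears to be a slip; what both proofs actually deliver is the bound on $B[x_0,r]\setminus B(0,1-\b)$.)
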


Note that Theorem~\ref{thm VN} is slightly stronger than Theorem~\ref{thm ergodic} since we only require $r \ge 2$, rather than $r \ge \frac{1}{1-\b}$. Similar to the case of Theorem~\ref{thm ergodic}, we cannot change the order of the projections in Theorem~\ref{thm VN}. See Remark~\ref{rem exchange VN} below. Theorem~\ref{thm VN} is proved in Section~\ref{sec family}.

%\medskip

\section{Lipschitz behaviour of the Douglas-Rachford operator}\label{sec lip}
 
Given two Banach spaces $(X,\|\cdot\|_X)$ and $(Y,\|\cdot\|_Y)$, a set $D\subseteq X$, and a map $f: D\to Y$, define the Lipschitz constant of $f$ to be
\begin{align*}
\|f\|_{\lip} = \sup_{\substack{x,y \in D \\ x\neq y}}\frac{\|f(x)-f(y)\|_Y}{\|x-y\|_X}.
\end{align*}
A map $f: X\to Y$ is said to be Lipschitz if $\|f\|_\lip < \infty$. Note that if $C\subseteq \H$, then $T_{\S,C}$ is not necessarily Lipschitz on $\H$, since $P_\S = x/\|x\|$, which is not Lipschitz. However, it is shown below that if $C\subseteq \H$ is convex, the Douglas-Rachford operator can be `smoothed' in a neighbourhood of the origin such that the smoothed operator satisfies a Lipschitz condition.

%\smallskip

\begin{thm}\label{thm lip approx}
Assume that $C\subseteq \H$ is a convex set, and let $\S$ be the unit sphere in $\H$~\eqref{def sphere}. Let $T = T_{\S,C}$, and let $\b\in [0,1)$. Then there exists $F:\H \to \H$ such that 
\begin{align*}
F\big|_{\H \setminus B(0,1-\b)} = T,
\end{align*}
and
\begin{align*}
\|F\|_\lip \le \frac{1}{1-\b}.
\end{align*}
\end{thm}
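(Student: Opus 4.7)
The plan is in two parts: first establish that the Douglas-Rachford operator $T = \tfrac12(I+R_CR_\S)$ is itself $\tfrac{1}{1-\b}$-Lipschitz on the closed set $\H\setminus B(0,1-\b)$; then invoke the Kirszbraun--Valentine extension theorem, which guarantees that any Lipschitz map from a subset of a Hilbert space into the same Hilbert space extends to the whole space with the same Lipschitz constant. This immediately produces the required $F:\H\to\H$.

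For the Lipschitz estimate on the annular region I would exploit convexity of $C$, which makes $P_C$ firmly non-expansive and hence $R_C$ non-expansive. Thus
\[
\|Tx-Ty\|\;\le\;\tfrac12\bigl(\|x-y\|+\|R_\S x-R_\S y\|\bigr),
\]
so the task reduces to bounding $\|R_\S x - R_\S y\|$ on $\{\|\cdot\|\ge 1-\b\}$. Writing $x=ru$ and $y=sv$ with $u,v\in\S$ and $r,s\ge 1-\b$, one has $R_\S x=(2-r)u$ and $R_\S y=(2-s)v$, and a direct expansion yields the clean identity
\[
\|R_\S x-R_\S y\|^2 - \|x-y\|^2 \;=\; 4\bigl(1-\langle u,v\rangle\bigr)\bigl(2-(r+s)\bigr).
\]
Since $r+s\ge 2(1-\b)$, the right side is at most $8\b(1-\langle u,v\rangle)$. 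Combining with the elementary lower bound $\|x-y\|^2\ge 2rs(1-\langle u,v\rangle)\ge 2(1-\b)^2(1-\langle u,v\rangle)$ gives
\[
\|R_\S x-R_\S y\|^2 \;\le\; \|x-y\|^2\cdot\frac{(1-\b)^2+4\b}{(1-\b)^2}\;=\;\Bigl(\tfrac{1+\b}{1-\b}\Bigr)^{\!2}\|x-y\|^2,
\]
and substituting back into the triangle-inequality bound for $T$ yields $\|Tx-Ty\|\le\tfrac{1}{1-\b}\|x-y\|$, as required.

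Having established this on $\H\setminus B(0,1-\b)$, applying Kirszbraun's theorem produces the extension $F$ with the same constant. The only non-routine step is spotting the factorisation of $\|R_\S x-R_\S y\|^2-\|x-y\|^2$; once that is in hand, the quadratic comparison for $\|x-y\|^2$ is elementary and the extension step is a direct invocation of a classical result. I therefore expect the Lipschitz bound for $R_\S$ on the annulus to be the main (and essentially only) substantive piece of the argument.
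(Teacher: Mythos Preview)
Your proposal is correct and follows essentially the same route as the paper. The paper proves the identical factorisation $\|R_\S x-R_\S y\|^2-\|x-y\|^2=4(2-\|x\|-\|y\|)\bigl(1-\langle x,y\rangle/(\|x\|\|y\|)\bigr)$, bounds the two factors exactly as you do to get $\|R_\S\|_\lip\le\tfrac{1+\b}{1-\b}$ on the annulus (this is their Proposition~2.1), combines with non-expansiveness of $R_C$ via the same triangle-inequality split, and then invokes Kirszbraun's theorem.
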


%\smallskip

We begin with the following proposition.

%\smallskip

\begin{prop}\label{prop ref lip}
Assume that $x,y\in \H\setminus B(0,1-b)$. Then
\begin{align*}
\|R_\S x-R_\S y\| \le \frac{1+ \b}{1-\b}\|x-y\|.
\end{align*}
\end{prop}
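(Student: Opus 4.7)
The plan is to reduce the reflection operator $R_\S$ to explicit coordinates relative to the radial magnitudes $\|x\|$, $\|y\|$ and the unit directions $u = x/\|x\|$, $v = y/\|y\|$, and then exploit an algebraic factorization. Since $P_\S x = x/\|x\|$, one has $R_\S x = 2x/\|x\| - x = (2-\|x\|)u$ and similarly $R_\S y = (2-\|y\|)v$, so I would first expand
\begin{align*}
\|R_\S x - R_\S y\|^2 = \|(2-\|x\|)u - (2-\|y\|)v\|^2
\end{align*}
using the inner product and the identity $\langle u,v\rangle = 1 - \|u-v\|^2/2$. After collecting terms this simplifies to
\begin{align*}
\|R_\S x - R_\S y\|^2 = \big(4 - 2(\|x\|+\|y\|)\big)\|u-v\|^2 + \|x-y\|^2.
\end{align*}

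Next I would convert $\|u-v\|^2$ into $\|x-y\|^2$ using the Pythagoras-like identity $\|x-y\|^2 = (\|x\|-\|y\|)^2 + \|x\|\|y\|\,\|u-v\|^2$, which comes from the same inner-product expansion. Substituting and discarding the non-positive term proportional to $(\|x\|-\|y\|)^2$ (provided $4 - 2(\|x\|+\|y\|) \ge 0$), the coefficient of $\|x-y\|^2$ collapses to
\begin{align*}
\frac{4 - 2(\|x\|+\|y\|) + \|x\|\|y\|}{\|x\|\|y\|} = \frac{(2-\|x\|)(2-\|y\|)}{\|x\|\|y\|},
\end{align*}
and this factorization is the main algebraic obstacle; once it is spotted the rest is mechanical.

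Finally I would split into two cases according to the sign of $4 - 2(\|x\|+\|y\|)$. If $\|x\| + \|y\| \ge 2$, then the first expression for $\|R_\S x - R_\S y\|^2$ immediately gives $\|R_\S x - R_\S y\| \le \|x-y\| \le \tfrac{1+\b}{1-\b}\|x-y\|$. Otherwise $\|x\|, \|y\| \in [1-\b, 2)$, and the factored bound combined with the estimates $2-\|x\|, 2-\|y\| \le 1+\b$ and $\|x\|\|y\| \ge (1-\b)^2$ yields
\begin{align*}
\|R_\S x - R_\S y\|^2 \le \frac{(2-\|x\|)(2-\|y\|)}{\|x\|\|y\|}\|x-y\|^2 \le \left(\frac{1+\b}{1-\b}\right)^2\|x-y\|^2,
\end{align*}
which is the claimed inequality.
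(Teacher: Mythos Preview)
Your proof is correct and runs parallel to the paper's. Both arrive at the same identity
\[
\|R_\S x - R_\S y\|^2 \;=\; \|x-y\|^2 + 2\bigl(2-\|x\|-\|y\|\bigr)\,\|u-v\|^2,
\]
the paper writing $1 - \langle x,y\rangle/(\|x\|\|y\|)$ in place of $\tfrac12\|u-v\|^2$. The only difference is in how the extra term is estimated: the paper bounds $1-\langle x,y\rangle/(\|x\|\|y\|)\le \tfrac{1}{2(1-\b)^2}\|x-y\|^2$ and $2-\|x\|-\|y\|\le 2\b$ separately and multiplies, whereas you keep $\|x\|\|y\|$ in the denominator, observe the factorisation $4-2(\|x\|+\|y\|)+\|x\|\|y\|=(2-\|x\|)(2-\|y\|)$, and only then use $\|x\|,\|y\|\ge 1-\b$. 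Your route produces the sharper pointwise coefficient $(2-\|x\|)(2-\|y\|)/(\|x\|\|y\|)$ before passing to the uniform constant, and it makes the case $\|x\|+\|y\|\ge 2$ explicit; the paper's argument handles that case silently, since the extra term is then nonpositive. Beyond this bookkeeping, the two proofs are the same.
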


\begin{proof}
Recall that 
\begin{align*}R_\S x = 2P_\S x-x =  \left(\frac 2 {\|x\|}-1\right)x.
\end{align*} 
Hence,
\begin{align}\label{bound norm R}
\nonumber \|R_\S x-R_\S y\|^2 & = \|R_\S x\|^2 + \|R_\S y\|^2 - 2\langle R_\S x,R_\S y\rangle 
\\
\nonumber & = (2-\|x\|)^2+(2-\|y\|)^2 - 2\left(\frac 2{\|x\|}-1\right)\left(\frac 2{\|y\|}-1\right)\langle x,y\rangle 
\\ 
\nonumber & = 4-4\|x\|+\|x\|^2+4-4\|y\|+\|y\|^2-2\left(\frac 4{\|x\|\|y\|}-\frac 2{\|x\|}-\frac 2 {\|y\|}\right)\langle x,y\rangle -2\langle x,y\rangle
\\
\nonumber & = \|x\|^2+\|y\|^2-2\langle x,y\rangle + 8-4\|x\|-4\|y\| - \frac{2}{\|x\|\|y\|}\big(4 - 2\|x\| - 2\|y\|\big)\langle x,y\rangle 
\\
& = \|x-y\|^2 + 4\big(2-\|x\|-\|y\|\big)\left(1-\frac{\langle x,y\rangle}{\|x\|\|y\|}\right)
\end{align}
Now, since $\|x\|\|y\| \le \frac{\|x\|^2+\|y\|^2}{2}$ for all $x,y\in \H$, if $x,y \in \H\setminus B(0,1-\b)$, then
\begin{eqnarray*}
\|x\|\|y\| - \langle x,y \rangle & \le & \frac{\|x\|^2+\|y\|^2}{2} - \langle x,y \rangle 
\\
& \stackrel{(*)}{\le} & \frac{\|x\|\|y\|}{(1-\b)^2}\left(\frac{\|x\|^2+\|y\|^2}{2} - \langle x,y \rangle\right)
\\ 
& = & \frac{\|x\|\|y\|}{2(1-\b)^2}\left(\|x\|^2+\|y\|^2 - 2\langle x,y \rangle\right) 
\\
& = & \frac{\|x\|\|y\|}{2(1-\b)^2}\|x-y\|^2,
\end{eqnarray*}
where in ($*$) we used the fact that 
\begin{align*}
\frac{\|x\|^2+\|y\|^2}{2} - \langle x,y \rangle \ge \|x\|\|y\| - \langle x,y\rangle \ge 0,
\end{align*} 
and the fact that $\|x\| \ge 1-\b$ and $\|y\| \ge 1-\b$. Therefore, if $x,y\in \H\setminus B(0,1-\b)$, then
\begin{align}\label{bound one minus}
1-\frac{\langle x,y\rangle}{\|x\|\|y\|} \le \frac{1}{2(1-\b)^2}\|x-y\|^2.
\end{align}
Plugging~\eqref{bound one minus} into~\eqref{bound norm R}, it follows that if $x,y\in \H\setminus B(0,1-\b)$, then $2-\|x\|-\|y\| \le 2\b$, and so
\begin{align*}
\|R_\S x-R_\S y\|^2 & \le \left(1+ 4\big(2-\|x\|-\|y\|\big)\frac{1}{2(1-\b)^2}\right)\|x-y\|^2 
\\ & \le \left(1+ \frac {4\b} {(1-\b)^2}\right)\|x-y\|^2
\\ & = \frac{(1+\b)^2}{(1-\b)^2}\|x-y\|^2.
\end{align*}
Hence,
\begin{align*}
\|R_\S x-R_\S y\| \le \frac{1+\b}{1-\b}\|x-y\|,
\end{align*}
and this completes the proof.
\end{proof}

Another tool which is needed in the proof of Theorem~\ref{thm lip approx} is the following theorem, known as Kirszbraun's Theorem. See for example~\cites{BL00, GK90}. Given a set $D\subseteq \H$, let $\overline{\mathrm{conv}(D)}$ denote its closed convex hull, where the convex hull is given by
\begin{align*}
\mathrm{conv}(D) = \left\{\sum_{i=1}^n t_i x_i~\Big|~ x_i \in D, ~~t_i\ge 0, ~~1 \le i \le n, ~~ \sum_{i=1}^n t_i = 1, ~~ n \in \N\right\}.
\end{align*}

Kirsbraun's theorem reads as follows.

\begin{thm}\label{thm Kirs}
Assume that $D_1,D_2 \subseteq \H$. Assume that $f:D_1 \to D_2$ is Lipschitz. Then there exists $F: \H \to \overline{\mathrm{conv}(D_2)}$ such that $F\big|_{D_1} = f$ and $\|F\|_{\lip} = \|f\|_{\lip}$.
\end{thm}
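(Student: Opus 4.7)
My proposal follows the classical three-step proof of Kirszbraun's theorem. After rescaling so that $\|f\|_\lip \le 1$, I would (i) establish a one-point extension principle, (ii) iterate via Zorn's lemma to obtain an extension $\widetilde F : \H \to \H$ with $\|\widetilde F\|_\lip \le 1$, and (iii) compose with the metric projection $P$ onto the closed convex set $\overline{\conv(D_2)}$. Step (iii) is essentially free: on a Hilbert space, $P$ is $1$-Lipschitz and fixes $\overline{\conv(D_2)}$, which contains $f(D_1) \subseteq D_2$; hence $F := P \circ \widetilde F$ still extends $f$, has range in $\overline{\conv(D_2)}$, and satisfies $\|F\|_\lip \le \|\widetilde F\|_\lip \le \|f\|_\lip$, with equality forced by the extension property.

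The one-point extension principle says: if $g : D \to \H$ extends $f$ with $\|g\|_\lip \le 1$ and $x_0 \in \H \setminus D$, then there exists $y_0 \in \H$ with $\|y_0 - g(z)\| \le \|x_0 - z\|$ for every $z \in D$. Equivalently,
\begin{align*}
\bigcap_{z \in D} B\bigl[g(z),\,\|x_0 - z\|\bigr] \;\neq\; \emptyset.
\end{align*}
Granting this, (ii) is a standard Zorn argument: the poset of Lipschitz extensions of $f$ with constant at most $1$, ordered by extension, has chains bounded above by unions, so a maximal element exists, and by the one-point principle its domain must be all of $\H$.

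The nonemptiness of the above intersection is the heart of the matter and the main obstacle. I would prove it via the Kirszbraun ball intersection property in Hilbert space: whenever $(p_i),(q_i) \subseteq \H$ are indexed by a common set with $\|q_i - q_j\| \le \|p_i - p_j\|$, and radii $(r_i)$ satisfy $\bigcap_i B[p_i, r_i] \neq \emptyset$, then $\bigcap_i B[q_i, r_i] \neq \emptyset$. This applies to the displayed intersection via $p_i := z_i \in D$, $q_i := g(z_i)$, $r_i := \|x_0 - z_i\|$, using $x_0 \in \bigcap_i B[p_i, r_i]$. For a finite subfamily, working inside the finite-dimensional affine span of the $q_i$'s, I would minimize the continuous coercive function $\varphi(y) := \max_i (\|y - q_i\|^2 - r_i^2)$; if $\varphi(y^*) > 0$ at the minimum, first-order optimality for a max of smooth convex functions yields weights $\lambda_i \ge 0$ supported on the active indices, summing to $1$, with $y^* = \sum_i \lambda_i q_i$. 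Expanding $\sum_i \lambda_i \|y^* - q_i\|^2$ and $\sum_i \lambda_i \|x^* - p_i\|^2$ (for any $x^* \in \bigcap_i B[p_i, r_i]$) via the polarization identity $\sum_{i,j}\lambda_i\lambda_j\|u_i-u_j\|^2 = 2\sum_i\lambda_i\|u_i - \sum_j \lambda_j u_j\|^2$ and invoking $\|q_i - q_j\| \le \|p_i - p_j\|$ gives $\varphi(y^*) \le 0$, a contradiction. The general (possibly infinite) case then follows from weak compactness of closed bounded convex subsets of $\H$, which upgrades the finite intersection property supplied by the finite case to nonemptiness of the full intersection.
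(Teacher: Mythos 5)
Your proposal is correct in outline, but note that the paper does not prove this statement at all: Theorem~\ref{thm Kirs} is quoted as a known result (Kirszbraun's theorem) with references to \cite{BL00} and \cite{GK90}, so there is no in-paper argument to compare against. What you have written is essentially the standard textbook proof that those references contain: reduce to Lipschitz constant $\le 1$, prove the one-point extension lemma via the Kirszbraun ball-intersection property, pass to a maximal extension by Zorn's lemma, and post-compose with the nonexpansive metric projection onto $\overline{\conv(D_2)}$ to enforce the range condition (this last step is exactly how one gets the codomain $\overline{\conv(D_2)}$ rather than all of $\H$, and the equality $\|F\|_\lip=\|f\|_\lip$ is automatic since $F$ extends $f$). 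The finite-case variational argument is sound: at a minimizer $y^*$ of $\varphi=\max_i(\|\cdot-q_i\|^2-r_i^2)$ with $\varphi(y^*)>0$, the subdifferential condition gives $y^*=\sum_i\lambda_i q_i$ over active indices, and then
\begin{align*}
\varphi(y^*)+\sum_i\lambda_i r_i^2=\sum_i\lambda_i\|y^*-q_i\|^2=\tfrac12\sum_{i,j}\lambda_i\lambda_j\|q_i-q_j\|^2\le\tfrac12\sum_{i,j}\lambda_i\lambda_j\|p_i-p_j\|^2\le\sum_i\lambda_i\|x^*-p_i\|^2\le\sum_i\lambda_i r_i^2,
\end{align*}
a contradiction; the only line you should make explicit is the second-to-last inequality, which uses the variance decomposition $\sum_i\lambda_i\|x^*-p_i\|^2=\|x^*-\sum_j\lambda_j p_j\|^2+\sum_i\lambda_i\|p_i-\sum_j\lambda_j p_j\|^2$ rather than the polarization identity alone. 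The passage from finite subfamilies to the full intersection via weak compactness of closed balls is also standard and correct. In short: a valid, self-contained proof of a result the paper simply imports.
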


We are now in a position to prove Theorem~\ref{thm lip approx}

\begin{proof}[Proof of Theorem~\ref{thm lip approx}]
Since $C$ is convex, it follows that $R_C$ is non-expansive. Let $x,y \in \H\setminus B(0,1-\b)$. Then
\begin{eqnarray*}
\|Tx-Ty\| & = & \left\|\left(\frac{I+R_CR_\S }{2}\right)x-\left(\frac{I+R_CR_\S }{2}\right)y\right\| 
\\ &=& \left\|\frac{x-y}{2} + \frac{R_CR_\S x-R_CR_\S y}{2}\right\|
\\ & \le & \frac 1 2 \|x-y\| + \frac 1 2 \|R_CR_\S x-R_CR_\S y\|
\\ & \stackrel{(*)}{\le} & \frac 1 2 \|x-y\|+ \frac 1 2 \|R_\S x-R_\S y\|
\\ & \stackrel{(**)}{\le} & \frac 1 2 \|x-y\| + \frac {1+\b}{2(1-\b)}\|x-y\| 
\\ & = & \frac{\|x-y\|}{1-\b},
%\\ & = & \left(1+ \frac \b{(1-\b)^2}\right)\|x-y\|,
\end{eqnarray*}
where in ($*$) we used the fact that $C$ is convex and thus $R_C$ is non-expansive, and in ($**$) we used Proposition~\ref{prop ref lip}. Applying Theorem~\ref{thm Kirs} to $T$ on the sets $ D_1 = \H\setminus B(0,1-\b)$ and $D_2 = \H$ completes the proof.
\end{proof}

%\smallskip

\begin{remark}\label{family lip}
Note that if $T = T^{s_1,s_2,s_3}_{\S,C}$ is as defined in~\eqref{def family}, then in particular, 
\begin{align*}
T = (s_1+(1-s_1)(s_2+s_3))I + (1-s_1)s_2(1-s_3)R_\S + (1-s_1)(1-s_2)s_3R_C+(1-s_1)s_2s_3R_CR_\S.
\end{align*}
Note also that 
\begin{align*}
(s_1+(1-s_1)(s_2+s_3)) + (1-s_1)s_2(1-s_3) + (1-s_1)(1-s_2)s_3 + (1-s_1)s_2s_3 =1.
\end{align*}
Hence, if $C$ is convex, then since both $I$ and $R_C$ are non-expansive, using Proposition~\ref{prop ref lip}, for every $x,y\in \H\setminus B(0,1-\b)$,
\begin{align}\label{lip const family}
\nonumber \|Tx-Ty\| & \le (s_1+(1-s_1)(s_2+s_3) + (1-s_1)(1-s_2)s_3)\|x-y\|
\\ \nonumber & \quad + ((1-s_1)s_2(1-s_3)+(1-s_1)s_2s_3)\frac{1+\b}{1-\b}\|x-y\|
\\ & = \frac{1+\b-2(s_1+(1-s_1)(s_2+s_3) + (1-s_1)(1-s_2)s_3)\b}{1-\b}\|x-y\|.
\end{align}
Thus, repeating the proof of Theorem~\ref{thm lip approx}, we obtain a similar result, but now the Lipschitz constant is the one given in~\eqref{lip const family}. \qede
\end{remark}

%\smallskip

\begin{remark}\label{exchange sets lip}
Even if $C\subseteq \H$ is convex, the map $x\mapsto R_\S R_Cx$ need not satisfy a Lipschitz condition, since $R_C$ might be arbitrarily close to $0$ (indeed, it might even not be defined). Thus, in general, Theorem~\ref{thm lip approx} does not hold for the operator $T=T_{C,\S}$. \qede
\end{remark}

%\smallskip

\begin{remark}\label{lip case line}
In the case $C = L_\lambda$, as defined in~\eqref{def line}, if $T = T_{\S, L_\lambda}$, then $\H_+$, $\H_-$, $\H_0$ as defined in~\eqref{def H} are all invariant under $T$. Hence, by applying Theorem~\ref{thm Kirs} with $D_1 = (\H_+\cup \H_0)\setminus B(0,1-\b)$ (resp. $(\H_-\cup \H_0)\setminus B(0,1-\b)$) and $D_2 = \H_+$ (resp. $\H_-$), it follows that in Theorem~\ref{thm lip approx} we can choose $F:\H_+\cup \H_0 \to \H_+\cup\H_0$ (resp. $F:\H_-\cup \H_0 \to \H_-\cup \H_0$). Note that we cannot choose $F:\H_+ \to \H_+$ or $F:\H_- \to \H_-$ as these are not closed sets. \qede
\end{remark}

%\medskip

\section{Proof of Theorem~\ref{thm ergodic}}\label{sec ergodic pf}

Given a set $D\subseteq \H$, define
\begin{align*}
\diam(D) = \sup_{x,y\in D}\|x-y\|.
\end{align*}

The next proposition shows that on a bounded convex set, we can `smooth' Lipschitz maps, so that the smoothed map satisfies an estimate of the form~\eqref{weak ergodic}. The smoothing operation is similar to the one which appeared in~\cite{RZ03}.

\begin{prop}\label{prop smooth}
Assume that $D\subseteq \H$ is bounded and convex, and let $F:D \to D$ be a Lipschitz map. Then for every $\alpha \le \|F\|_\lip$ there exists a map $G:D \to D$ such that
\begin{align*}
\|Gx-Fx\| \le \left(1-\frac \alpha{\|F\|_\lip}\right)\diam(D),
\end{align*}
and for all $n\in \N$,
\begin{align*}
\sup_{x,y\in D}\|G^nx-G^ny\| \le \alpha^n\diam(D).
\end{align*}
In particular, if $\alpha \in [0,1)$,
\begin{align*}
\sup_{x,y\in D}\|G^nx-G^ny\| \stackrel{n\to \infty}{\longrightarrow} 0.
\end{align*}
\end{prop}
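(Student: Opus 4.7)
The plan is to define $G$ as a convex combination of $F$ with a well-chosen constant map: a standard "contract toward a fixed point" perturbation. The reason this is the right move is that any constant term cancels in the differences $Gx-Gy$, so weighting $F$ by a scalar $\lambda\in[0,1]$ and the constant by $1-\lambda$ multiplies the Lipschitz constant by exactly $\lambda$. Choosing $\lambda = \alpha/\|F\|_{\lip}$ then rescales the Lipschitz constant to the prescribed value $\alpha$, while keeping $G$ and $F$ uniformly close on $D$.

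Concretely, I would dispense with the trivial case $\|F\|_{\lip}=0$ (where $F$ is constant and one takes $G=F$), then fix any point $p\in D$ (which is nonempty since $F$ maps $D$ into itself), set $\lambda := \alpha/\|F\|_{\lip}\in[0,1]$ using the hypothesis $\alpha\le \|F\|_{\lip}$, and define
$$Gx := \lambda Fx + (1-\lambda)p, \qquad x\in D.$$
The convexity of $D$ is precisely what guarantees $Gx\in D$ for every $x\in D$, so $G:D\to D$ is well-defined.

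The first estimate then follows from the identity $Gx - Fx = (1-\lambda)(p-Fx)$, whose norm is at most $(1-\lambda)\diam(D) = (1-\alpha/\|F\|_{\lip})\diam(D)$. For the ergodic estimate, the identity $Gx-Gy = \lambda(Fx-Fy)$ gives $\|G\|_{\lip}\le \lambda \|F\|_{\lip}=\alpha$; a one-line induction yields $\|G^n\|_{\lip}\le \alpha^n$, and combining with $\|x-y\|\le \diam(D)$ delivers the bound $\|G^nx-G^ny\|\le \alpha^n\diam(D)$. The final convergence-to-zero claim for $\alpha\in[0,1)$ is then immediate.

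I do not anticipate a genuine obstacle; the construction is entirely elementary. The only real design constraint is ensuring $G(D)\subseteq D$, which is exactly what forces the perturbation to be by a constant \emph{inside} $D$ (rather than by, say, the origin) and forces the appeal to convexity of $D$. Everything else reduces to the two one-line identities above.
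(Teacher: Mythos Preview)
Your proposal is correct and is essentially identical to the paper's own proof: the paper also fixes a point $\theta\in D$, sets $\gamma = 1-\alpha/\|F\|_{\lip}$, and defines $Gx=(1-\gamma)Fx+\gamma\theta$, then reads off the same two estimates. The only cosmetic difference is that you explicitly dispose of the case $\|F\|_{\lip}=0$.
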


\begin{proof}
Let $\theta \in D$ and $\gamma\in [0,1]$. Define
\begin{align*}
Gx = (1-\gamma)Fx + \gamma \theta.
\end{align*}
Then since $D$ is convex, it follows that $G(D)\subseteq D$, and 
\begin{align*}
\sup_{x\in D}\|Gx-Fx\| = \sup_{x\in D}\gamma\|Fx-\theta\| \le \gamma\,\diam(D).
\end{align*}
Also, 
\begin{align*}
\|G\|_\lip = (1-\gamma)\|F\|_\lip.
\end{align*}
Choosing $\gamma = 1-\frac \alpha{\|F\|_\lip} \in [0,1]$ and using the fact that $\|G^n\|_\lip \le \|G\|_\lip^n$ completes the proof.
\end{proof}

%\smallskip

We are now in a position to prove Theorem~\ref{thm ergodic}.

%\smallskip

\begin{proof}[Proof of Theorem~\ref{thm ergodic}]
Since $x_0\in \S\cap C$, we have $Tx_0 = x_0$, see~\eqref{cond iff}. Let $F:\H \to \H$ be the map obtained from Theorem~\ref{thm lip approx}. Let $x\in B[x_0,r]$. If $x\notin B[0,1]$ then $R_\S = R_{\mathbb B}$, where
\begin{align*}
\mathbb B = \big\{x\in \H~|~ \|x\| \le 1\big\},
\end{align*}
which is convex. Thus, in this case, $R_C$, $R_\S$ and therefore $T$ are all non-expansive, and so
\begin{align*}
\|Fx-Fx_0\| =\|Tx-Tx_0\| \le \|x-x_0\| \le r.
\end{align*}
If $x\in B[0,1]$, then by Theorem~\ref{thm lip approx},
\begin{align*}
\|Fx - Fx_0\| \le \frac{\|x-x_0\|}{1-\b} \le \frac{2}{1-\b}.
\end{align*}
Therefore, if $r \ge \frac{2}{1-\b}$, then 
\begin{align*}
F(B[x_0,r]) \subseteq B[x_0,r].
\end{align*}
Now, $\diam(B[x_0,r]) = 2r$. Applying Proposition~\ref{prop smooth} to the function $F$ on the domain $D=B[x_0,r]$, it follows that for every $\alpha \le \frac1{1-\b}$, there exists $G:\H\to \H$ which satisfies $G(B[x_0,r])\subseteq B[x_0,r]$, and such that 
\begin{align*}
\sup_{x\in B[x_0,r]}\|Gx-Fx\| \le 2r\left(1- \alpha(1-\b)\right),
\end{align*}
and 
\begin{align*}
\sup_{x,y\in B[x_0,r]}\|G^nx-G^ny\| \le 2r\alpha^n.
\end{align*}
Since 
\begin{align*}
\sup_{x\in B[x_0,r]\setminus B(0,1-\b)}\|Gx-Tx\| \le \sup_{x\in B[x_0,r]}\|Gx-Fx\|,
\end{align*}
the proof is complete.
\end{proof}

%\smallskip

\begin{remark}
Note that by Proposition~\ref{prop smooth}, the choice of $G$ in Theorem~\ref{thm ergodic} depends on $\alpha$ and on the centre point $x_0$. \qede
\end{remark}

%\smallskip

\begin{remark}\label{family ergodic}
If we consider now the operator $T = T_{\S,C}^{s_1,s_2,s_3}$ as defined in~\eqref{def family}, then repeating the proof of Theorem~\ref{thm ergodic} but now using Remark~\ref{family lip}, we obtain Theorem~\ref{thm family}. Note that the conditions on $\alpha$ and $r$ that we need are $r \ge 2\|F\|_\lip$ and $\alpha \le \|F\|_{\lip}$, where $F$ is the function obtained in Theorem~\ref{thm lip approx} (applied now to the operator $T$). These are exactly the conditions that appear in Theorem~\ref{thm family}.  \qede
\end{remark}

%\smallskip

\begin{remark}\label{exchange sets ergodic}
Since, by Remark~\ref{exchange sets lip}, Theorem~\ref{thm lip approx} does not necessarily hold if we let $T = T_{C,\S}$, the same is true for Theorem~\ref{thm ergodic}. \qede
\end{remark}

%\smallskip

\begin{remark}\label{ergodic case line}
In the case of the sphere and the line, $C = L_\lambda$, $\lambda \in [0,1]$ as defined in~\eqref{def line}, it follows from Remark~\ref{lip case line} that we can choose $G:\H_+\cup\H_0 \to \H_+\cup \H_0$ such that 
\begin{align*}
\sup_{\substack{ x\in B[x_0,r]\setminus B(0,1-\b) \\ x\in \H_+\cup \H_0}}\|Gx - Tx\| \le 2r\left(1-\alpha(1-\b)\right),
\end{align*}
and for all $n\in \N$,
\begin{align*}
\sup_{\substack{x,y \in B[x_0,r] \\ x,y\in \H_+\cup \H_0}}\|G^nx-G^ny\| \le 2r\alpha^n.
\end{align*}
If we replace $\H_+$ by $\H_-$ we obtain a similar result. \qede
\end{remark}

%\medskip

\section{Proof of Theorem~\ref{thm VN}}\label{sec family}

We begin with the following proposition, which shows that the projection operator on the sphere, $P_\S$, satisfies a Lipschitz condition away from the origin.

\begin{prop}\label{prop proj lip}
For every $x,y\in \H\setminus\{0\}$,
\begin{align*}
\left\|\frac x {\|x\|} - \frac y {\|y\|}\right\| \le \max\left\{\frac 1 {\|x\|}, \frac 1 {\|y\|}\right\}\|x-y\|.
\end{align*}
In particular, if $\b\in[0,1)$, $x,y \in \H\setminus B(0,1-\b)$, and $\S$ is the unit sphere in $\H$~\eqref{def sphere}, 
\begin{align*}
\|P_\S x-P_\S y\| \le \frac{\|x-y\|}{1-\b}.
\end{align*}
\end{prop}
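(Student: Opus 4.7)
The plan is to prove the main inequality via direct computation of the squared norm. Since $x/\|x\|$ and $y/\|y\|$ are unit vectors, expanding yields
\begin{align*}
\left\|\frac{x}{\|x\|} - \frac{y}{\|y\|}\right\|^2 = 2 - \frac{2\langle x, y\rangle}{\|x\|\|y\|}.
\end{align*}
The statement is symmetric in $x$ and $y$, so I would assume without loss of generality that $\|x\| \le \|y\|$, in which case $\max\{1/\|x\|, 1/\|y\|\} = 1/\|x\|$. The claim, once squared, reduces to verifying
\begin{align*}
2 - \frac{2\langle x, y\rangle}{\|x\|\|y\|} \le \frac{\|x\|^2 + \|y\|^2 - 2\langle x, y\rangle}{\|x\|^2}.
\end{align*}

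Clearing denominators by multiplying through by $\|x\|^2 \|y\|$ and regrouping, this inequality is equivalent to
\begin{align*}
(\|y\| - \|x\|)\bigl(\|y\|(\|x\| + \|y\|) - 2\langle x, y\rangle\bigr) \ge 0.
\end{align*}
The first factor is nonnegative by the normalization $\|x\| \le \|y\|$, and the second factor is nonnegative by Cauchy--Schwarz combined with the same inequality: indeed, $2\langle x, y\rangle \le 2\|x\|\|y\| \le \|x\|\|y\| + \|y\|^2 = \|y\|(\|x\| + \|y\|)$. The ``in particular'' clause is then immediate, since $x,y \in \H \setminus B(0,1-\b)$ gives $\|x\|, \|y\| \ge 1-\b$, whence $\max\{1/\|x\|, 1/\|y\|\} \le 1/(1-\b)$, and $P_\S x = x/\|x\|$ by definition for nonzero $x$.

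The point requiring care is the sharpness of the constant. A naive triangle-inequality argument based on the decomposition $\frac{x}{\|x\|} - \frac{y}{\|y\|} = \frac{x-y}{\|x\|} + \frac{(\|y\|-\|x\|)y}{\|x\|\|y\|}$ would only yield a bound of $2\max\{1/\|x\|, 1/\|y\|\}\,\|x-y\|$, via the reverse triangle inequality $|\|x\|-\|y\|| \le \|x-y\|$; this would propagate to a Lipschitz constant of $2/(1-\b)$, which is too weak to recover the estimate $(1+\b)/(1-\b)$ obtained for $R_\S$ in Proposition~\ref{prop ref lip} through the identity $R_\S = 2P_\S - I$. The squared-norm calculation above is what allows the sharp coefficient $\max\{1/\|x\|, 1/\|y\|\}$ to be attained.
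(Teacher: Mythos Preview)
Your proof is correct and essentially identical to the paper's: both assume $\|x\|\le\|y\|$, pass to the squared inequality, and close via Cauchy--Schwarz together with the ordering; the paper writes the difference of the two sides as the perfect square $(\|y\|/\|x\|-1)^2$, while you factor the same quantity (after clearing denominators) as $(\|y\|-\|x\|)\bigl(\|y\|(\|x\|+\|y\|)-2\langle x,y\rangle\bigr)$, which is the same algebra repackaged.

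One aside on your final paragraph: applying $R_\S=2P_\S-I$ with the sharp bound $1/(1-\b)$ for $P_\S$ and the triangle inequality only yields $\|R_\S x-R_\S y\|\le \tfrac{2}{1-\b}\|x-y\|+\|x-y\|=\tfrac{3-\b}{1-\b}\|x-y\|$, not $(1+\b)/(1-\b)$, so that motivational remark is slightly off---but this does not touch the proof itself.
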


\begin{proof}
Assume without loss of generality that $\|x\| \le \|y\|$. Then
\begin{eqnarray*}
&& \frac{1}{\|x\|^2}\|x-y\|^2 - \left\|\frac x{\|x\|}-\frac y {\|y\|}\right\|^2  = \frac{\|y\|^2}{\|x\|^2} -2\langle x,y\rangle \left(\frac 1{\|x\|^2}-\frac 1 {\|x\|\|y\|}\right)-1
\\
&& \quad\stackrel{(*)}{\ge}\frac{\|y\|^2}{\|x\|^2} -2\|x\|\|y\|\left(\frac 1{\|x\|^2}-\frac 1 {\|x\|\|y\|}\right)-1 = \frac{\|y\|^2}{\|x\|^2} - 2\frac{\|y\|}{\|x\|}+1 = \left(\frac{\|y\|}{\|x\|}-1\right)^2 \ge 0,
\end{eqnarray*}
where in ($*$) we used the fact that $\langle x,y \rangle \le \|x\|\|y\|$ and the fact that $\frac{1}{\|x\|^2}-\frac 1{\|x\|\|y\|} \ge 0$ (since $\|x\| \le \|y\|$). Thus,
\begin{align*}
\left\|\frac x{\|x\|}-\frac y {\|y\|}\right\| \le  \frac{1}{\|x\|}\|x-y\|,
\end{align*}
which completes the proof of the first statement. The second statement follows as $P_\S x = x/\|x\|$ for all $x\in \H\setminus\{0\}$.
\end{proof}

%\smallskip

We are now in a position to prove Theorem~\ref{thm VN}.

%\smallskip

\begin{proof}[Proof of Theorem~\ref{thm VN}]
Note first that if $r \ge 2$, then since $x_0\in \S$, $B[0,1] \subseteq B[x_0,r]$. Therefore, $P_\S(B[x_0,r]\setminus\{0\}) = \S$. Now, since $C$ is convex, $P_C$ is non-expansive, and so for all $x\in B[x_0,r]\setminus \{0\}$,
\begin{align}\label{dist less 2}
\|P_CP_\S x - P_CP_\S x_0\| \le \|P_\S x - P_\S x_0\| \le 2 \le r.
\end{align}
Therefore,
\begin{align}\label{inside ball}
P_CP_\S(B[x_0,r]\setminus \{0\}) \subseteq B[x_0,r]
\end{align}
In particular, it follows that 
\begin{align*}
P_CP_\S((1-\b)\S) \subseteq B[x_0,r],
\end{align*}
where 
\begin{align*}
(1-\b)\S = \big\{x\in \H~|~ \|x\| = 1-\b\big\}.
\end{align*}
Thus, by Theorem~\ref{thm Kirs}, there exists $F:\H \to B[x_0,r]$ such that $\|F\|_\lip = \frac 1 {1-\b}$ and $F\big|_{(1-\b)\S} = P_CP_\S$. Define, $F_1:\H\to \H$,
\begin{align}\label{def F1}
F_1x =\begin{dcases} Fx & x\in B[0,1-\b] , \\ P_CP_\S x & x\in \H\setminus B(0,1-\b). \end{dcases}
\end{align}
If $x,y \in B[0,1-\b]$ or $x,y\in \H\setminus B(0,1-\b)$ then since $\|F\|_\lip = \frac 1 {1-\b}$ and by Proposition~\ref{prop proj lip},
\begin{align}\label{case both}
\|F_1x-F_1y\| \le \frac{\|x-y\|}{1-\b}.
\end{align} 
If, without loss of generality, $x\in B[0,1-\b]$ and $y\in \H\setminus B(0,1-\b)$, then there exists $t\in [0,1]$ such that $\|tx+(1-t)y\| = 1-\b$. Thus,
\begin{eqnarray}\label{case one}
\nonumber \|F_1x-F_1y\| & \le & \|F_1x-F_1(tx+(1-t)y)\| + \|F_1(tx+(1-t)y) - F_1y\| 
\\ \nonumber & \stackrel{\eqref{def F1}}{=} & \|Fx-F(tx+(1-t)y)\| + \|P_CP_\S (tx+(1-t)y) - P_CP_\S y\| 
\\ \nonumber & \stackrel{(*)}{\le} & (1-t)\frac{\|x-y\|}{1-\b}+ t\frac{\|x-y\|}{1-\b} 
\\ & = & \frac{\|x-y\|}{1-\b},
\end{eqnarray}
where in ($*$) we used the fact that $\|F\|_\lip = \frac 1{1-\b}$ and Proposition~\ref{prop proj lip}. Combining~\eqref{case both} and~\eqref{case one}, it follows that $\|F_1\|_\lip = \frac 1 {1-\b}$. Now, if $r\ge 2$, 
\begin{align*}
F_1(B[0,1-\b]) = F(B[0,1-\b]) \stackrel{(*)}{\subseteq} B[x_0,r],
\end{align*}
and 
\begin{align*}
F_1(B[x_0,r]\setminus B(0,1-\b)) = P_CP_\S(B[x_0,r]\setminus B(0,1-\b)) \stackrel{\eqref{inside ball}}{\subseteq} B[x_0,r],
\end{align*}
where in ($*$) we used the fact that $F(\H)\subseteq  B[x_0,r]$. Altogether,
\begin{align*}
F_1(B[x_0,r]) \subseteq  B[x_0,r],
\end{align*}
and $\|F_1\|_\lip = \frac 1 {1-\b}$. Applying Proposition~\ref{prop smooth} to $F_1$ on the domain $B[x_0,r]$ completes the proof.
\end{proof}

%\smallskip

\begin{remark}\label{rem exchange VN}
Note that we cannot change the order of projections in Theorem~\ref{thm VN}. Indeed, it is possible that $P_Cx=0$ for some $x\in \H$, and then $P_\S P_Cx$ is not defined. Even if $\|P_Cx\| >0$, $\|P_Cy\| >0$, then by Proposition~\ref{prop proj lip}, 
\begin{align*}
\|P_\S P_C x - P_\S P_C y\| \le \max\left\{\frac 1{\|P_Cx\|}, \frac 1 {\|P_Cy\|}\right\}\|P_Cx - P_Cy\| \le \max\left\{\frac 1{\|P_Cx\|}, \frac 1 {\|P_Cy\|}\right\}\|x - y\|,
\end{align*}
but $\max\left\{\frac 1{\|P_Cx\|}, \frac 1 {\|P_Cy\|}\right\}$ can be very large. Thus, we do not obtain an estimate similar to~\eqref{dist less 2}. \qede
\end{remark}

%\smallskip

\subsection*{Acknowledgements} This note is a revised and much simplified version of a note whose original version can be found at~\url{https://www.carma.newcastle.edu.au/jon/weak-ergodicity.pdf}. Note that the results in the original version apply only for the case of the sphere and a line in finite dimensional spaces, while here the results are more general. 
Sadly, the first named author passed away before this note was being revised. The second named author is grateful to Jon Borwein for many interesting conversations and for his warm friendship.

\end{document}